\documentclass{amsart}
\usepackage{amsmath,amssymb,graphicx,setspace,verbatim}
\usepackage{color, tikz, url}
\input xy
\xyoption{all}
\newtheorem{prop}{Proposition}[section]
\newtheorem{coro}[prop]{Corollary}

\newtheorem{example}{Example}
\newtheorem{remark}{Remark}
\newtheorem{thm}[prop]{Theorem}
\newtheorem{lemma}[prop]{Lemma}

\newtheorem{construction}[prop]{Construction}

\begin{document}
\title{On residual connectedness in chiral geometries}

\author[D. Leemans]{Dimitri Leemans}
\address{Dimitri Leemans, Universit\'e Libre de Bruxelles, D\'epartement de Math\'ematique, C.P.216 - Alg\`ebre et Combinatoire, Boulevard du Triomphe, 1050 Brussels, Belgium
}
\email{dleemans@ulb.ac.be}

\author[P. Tranchida]{Philippe Tranchida}
\address{Philippe Tranchida, Department of Mathematical Sciences, KAIST, 291 Daehak-ro Yuseong-gu, Daejeon, 34141, South Korea}
\email{ptranchi@kaist.ac.kr}

\date{}
\maketitle
\begin{abstract}
We show that a chiral coset geometry constructed from a $C^+$-group necessarily satisfies residual connectedness and is therefore a hypertope.
\end{abstract}
\textbf{Keywords}: coset geometries, hypertopes, chirality, $C^+$-groups, residual connectedness.

\section{Introduction}

The concept of a hypertope, introduced recently in \cite{Hypertopes}, is a generalization of an abstract polytope. There are different but equivalent ways to define (abstract) polytopes, one of which being that its faces form a partially ordered set that is a thin, residually connected geometry. This has been generalized in \cite{Hypertopes} to include structures built from a set of (what we still call) faces that do not form a partially ordered set.

Several papers have been written on the subject (see for instance~\cite{tetrahyp,FL2018,FLPW,FLW,ens,hou}) and in those paper dealing with chiral hypertopes, the check for residual connectedness has been a bit of a struggle.

In the regular case, there is an easy way to test if a coset geometry is residually connected (see Theorem~\ref{theorem46}). The aim of this paper is to prove a similar result for chiral geometries.

The paper is organized as follows.
In Section~\ref{prelim}, we recall the definitions and notation needed to understand this paper.
In Section~\ref{hypertopes}, we give the basic definitions on hypertopes and recall how to construct regular and chiral hypertopes from $C$-groups and $C^+$-groups.
In Section~\ref{last}, we prove our main result, namely:
\begin{thm}\label{main}
Let $G$ be a group and $R$ be a set of generators of $G$ such that \((G^+,R)\) is a \(C^+\)-group.
Let \(\Gamma= \Gamma(G^+,(G_i^+)_{i\in I})\) be the coset geometry associated to \(G^+\). Then, if \(\Gamma\) is chiral, it is residually connected.
\end{thm}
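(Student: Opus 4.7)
The plan is to verify, for the chiral coset geometry $\Gamma(G^+, (G_i^+)_{i \in I})$, the standard criterion for residual connectedness: for every $J \subsetneq I$ with $|I \setminus J| \geq 2$, the parabolic subgroup $G_J^+ := \bigcap_{j \in J} G_j^+$ is generated by the larger parabolics $G_{J \cup \{k\}}^+$ as $k$ ranges over $I \setminus J$. This reduces the geometric claim about residues to a purely group-theoretic claim about how intersections of the $G_i^+$'s are generated, parallel to what Theorem~\ref{theorem46} achieves in the regular case.

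The main tool is the $C^+$-intersection property, which yields $G_J^+ \cap G_K^+ = G_{J \cup K}^+$ for all $J,K \subseteq I$. I would proceed by downward induction on $|I \setminus J|$: the base case $|I \setminus J| = 2$ is essentially a direct unpacking of the $C^+$-intersection property itself and the observation that every rank-$2$ residue of a thin geometry with the intersection property is connected. For the inductive step, given $g \in G_J^+$, I would write $g$ as a word in the generators $R$ of $G^+$ and apply the intersection identities repeatedly to rewrite this word as a product of elements drawn from the larger parabolics $G_{J \cup \{k\}}^+$.

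Chirality enters at two decisive points. First, it guarantees that the stabilizer of the base chamber is trivial, so the correspondence between chambers of a residue and cosets in the corresponding parabolic is sharp; without this, a group-theoretic generation statement does not translate cleanly into geometric connectedness. Second, chirality ensures that we are not secretly in the regular situation (where one could simply cite Theorem~\ref{theorem46}), so the argument must genuinely handle the absence of involutive reflections and rely solely on the rotations $\alpha_i$.

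The principal obstacle lies in the inductive rewriting step. In the regular case, the involutions $\rho_i$ provide atomic moves that adjust exactly one type at a time, making the decomposition transparent. In the chiral case, each available generator $\alpha_i$ simultaneously acts on two types, so a single rewriting step cannot cleanly reduce $|J|$ by one, and careful bookkeeping is required to ensure that successive manipulations actually terminate in a product of elements from the claimed larger parabolics. I expect that combining the $C^+$-intersection property with the chiral structure of $G^+$ will force the decomposition to close up; making this closure precise, and showing it is genuinely geometric rather than merely formal, is where the heart of the proof lies.
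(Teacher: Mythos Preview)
Your plan has a genuine gap at the base case. When $|I\setminus J|=2$, say $I\setminus J=\{k,l\}$, the intersection condition gives $G_J^+=\bigcap_{j\in J}G_j^+=\langle\alpha_{kl}\rangle$, while each of the larger parabolics $G_{J\cup\{k\}}^+$ and $G_{J\cup\{l\}}^+$ is the trivial group. Hence $\langle G_{J\cup\{k\}}^+,\,G_{J\cup\{l\}}^+\rangle=\{e\}\neq G_J^+$, and the generation identity you set out to verify is simply \emph{false} there. Connectedness of the rank-$2$ residues is therefore not ``a direct unpacking of the $C^+$-intersection property'', nor is it a general fact about thin geometries; it is precisely where the paper does nontrivial work, exhibiting each rank-$2$ residue as (at least) a circuit by combining the incidences coming from the base chamber with those coming from the adjacent chamber produced by $\alpha_{kl}$ (Lemma~\ref{lemma:chamber} and the discussion around Figure~\ref{fig:rank2residue}).

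There is a second gap in the reduction step. The equivalence between $G_J=\langle G_{J\cup\{i\}}\mid i\in I\setminus J\rangle$ and connectedness of $\Gamma_J$ is established (Theorem~\ref{proposition:RC}) only under flag-transitivity, which chirality explicitly excludes. What survives in the chiral case is a one-way implication: if the natural map $\varphi_J\colon\Gamma(G_J,(G_{J\cup\{i\}})_{i\in I\setminus J})\to\Gamma_J$ is surjective and its source is connected, then so is $\Gamma_J$. The paper secures surjectivity of $\varphi_J$ for every $J$ with $|I\setminus J|\geq 2$ from the fact that a chiral automorphism group is transitive on all flag types except chambers (Propositions~\ref{jnoti} and~\ref{chiral_final}); your remark about a ``trivial chamber stabilizer'' does not provide this. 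Incidentally, for $|I\setminus J|\geq 3$ no induction or word-rewriting is needed: each generator $\alpha_{kl}$ of $G_J^+$ with $k,l\in I\setminus J$ already lies in $G_{J\cup\{m\}}^+$ for any $m\in I\setminus(J\cup\{k,l\})$, so the generation is immediate once $|I\setminus J|\geq 3$.
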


This theorem shows that residual connectedness for hypertopes constructed from $C^+$-groups follows from chirality in exactly the same way as it follows from flag-transitivity for hypertopes constructed from C-groups.
In other words, it is enough to test that a coset geometry constructed from a $C^+$-group is a chiral geometry to be able to conclude that it is a hypertope.

\section{Preliminaries}\label{prelim}

\subsection{Incidence geometries}
An {\it incidence system}  is a 4-tuple $\Gamma := (X, *, t, I)$ such that
\begin{itemize}
\item $X$ is a set whose elements are called the {\it elements}, or {\it faces}, of $\Gamma$;
\item $I$ is a set whose elements are called the {\it types} of $\Gamma$;
\item $t:X\rightarrow I$ is a {\it type function}, associating to each element $x\in X$ of $\Gamma$ a type $t(x)\in I$;
\item $*$ is a binary relation on $X$ called {\em incidence}, that is reflexive, symmetric and such that for every $x,y\in X$, if $x*y$ and $t(x) = t(y)$ then $x=y$.
\end{itemize}
The {\it rank} of $\Gamma$ is the cardinality of $I$. 
A {\it flag} is a set of pairwise incident elements of $\Gamma$.
The {\it type} of a flag $F$ is $\{t(x) : x \in F\}$ and a flag of type $I$ is called a {\em chamber}.

An incidence system $\Gamma$ is a {\it geometry} or {\it incidence geometry} if every flag of $\Gamma$ is contained in a chamber.
An element $x$ is {\em incident} to a flag $F$, and we write $x*F$ for that, provided $x$ is incident to all elements of $F$.
If $\Gamma = (X, *, t, I)$ is an {\it incidence geometry} and $F$ is a flag of $\Gamma$,
the {\em residue} of $F$ in $\Gamma$ is the incidence geometry $\Gamma_F := (X_F, *_F, t_F, I_F)$ where $X_F := \{ x \in X : x * F, x \not\in F\}$;  $I_F := I \setminus t(F)$;  $t_F$ and $*_F$ are the restrictions of $t$ and $*$ to $X_F$ and $I_F$.

The {\it incidence graph} of $\Gamma$ is the graph whose vertex set is $X$ and where two distinct vertices are joined provided the corresponding elements of $\Gamma$ are incident.

An incidence system $\Gamma$ is {\em connected} if its incidence graph is connected. It is
{\em residually connected} when each residue of rank at least two of $\Gamma$ (including $\Gamma$ itself) has a connected incidence graph. 

An incidence system $\Gamma$ is {\it thin} (respectively {\em firm}) when every residue of rank one of $\Gamma$ contains exactly (respectively at least) two elements. 

Let $\Gamma =(X,*, t,I)$ be an incidence system.
An {\em automorphism} of $\Gamma$ is a 
permutation $\alpha$ of $X$ inducing a permutation of $I$ such that
\begin{itemize}
\item for each $x$, $y\in X$, $x*y$ if and only if $\alpha(x)*\alpha(y)$;
\item for each $x$, $y\in X$, $t(x)=t(y)$ if and only if $t(\alpha(x))=t(\alpha(y))$.
\end{itemize}
An automorphism $\alpha$ of $\Gamma$ is called {\it type preserving} when for each $x\in X$, $t(\alpha(x))=t(x)$.
The set of type-preserving automorphisms of $\Gamma$ is a group denoted by $Aut_I(\Gamma)$.
The set of automorphisms of $\Gamma$ is a group denoted by  $Aut(\Gamma)$.
A group $G\leq Aut_I(\Gamma)$ acts {\em flag-transitively} on $\Gamma$ if $G$ is transitive on the set of chambers of $\Gamma$. In this case, we also say that $\Gamma$ is {\em flag-transitive}.
The following proposition shows how, starting from a group $G$, we can construct an incidence system whose type-preserving automorphism group contains $G$.

\begin{prop}(Tits, 1956)~\cite{Tits}\label{tits}
Let $n$ be a positive integer
and $I:= \{0,\ldots ,n-1\}$.
Let $G$ be a group together with a family of subgroups ($G_i$)$_{i \in I}$, $X$ the set consisting of all cosets $G_ig$ with $g \in G$ and $i \in I$, and $t : X \rightarrow I$ defined by $t(G_ig) = i$.
Define an incidence relation $*$ on $X\times X$ by:
\begin{center}
$G_ig_1 * G_jg_2$ if and only if $G_ig_1 \cap G_jg_2 \neq \emptyset$.
\end{center}
Then the 4-tuple $\Gamma := (X, *, t, I)$ is an incidence system having a chamber.
Moreover, the group $G$ acts by right multiplication on $\Gamma$ as a group of type preserving automorphisms.
Finally, the group $G$ is transitive on the flags of rank less than 3.
\end{prop}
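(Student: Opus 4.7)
The plan is to verify the three claims of the proposition in sequence, all of which reduce to routine coset arithmetic. First I would check that $\Gamma$ is an incidence system as defined earlier: reflexivity of $*$ is immediate since $G_i g \cap G_i g \neq \emptyset$, and symmetry is obvious from the symmetry of intersection. The only nontrivial axiom is that $x * y$ together with $t(x)=t(y)$ forces $x=y$; this is the classical statement that two right cosets of the same subgroup are either equal or disjoint, since any $a \in G_i g_1 \cap G_i g_2$ yields $G_i g_1 = G_i a = G_i g_2$.

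For the existence of a chamber, I would exhibit one directly: the identity $1 \in G$ lies in $G_i \cdot 1 = G_i$ for every $i \in I$, so the $n$ cosets $\{G_i : i \in I\}$ are pairwise incident and of pairwise distinct types, hence form a flag of type $I$, i.e.\ a chamber.

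For the action of $G$, I would define $\rho_g : X \to X$ by $\rho_g(G_i h) = G_i h g$. Well-definedness follows since $G_i h_1 = G_i h_2$ implies $h_1 h_2^{-1} \in G_i$, hence $G_i h_1 g = G_i h_2 g$; the map $g \mapsto \rho_g$ is then a homomorphism from $G$ into $\mathrm{Sym}(X)$, each $\rho_g$ is manifestly type-preserving, and preservation of incidence amounts to the equivalence $G_i h_1 \cap G_j h_2 \neq \emptyset \iff G_i h_1 g \cap G_j h_2 g \neq \emptyset$, witnessed by right-multiplying any element of the intersection by $g$ (and by $g^{-1}$ for the converse).

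Finally, for transitivity on flags of rank less than $3$: the rank-$0$ case is vacuous, and in rank $1$ the coset $G_i g$ is the image of $G_i$ under $\rho_g$, giving transitivity within each type class (which is the only sense possible under a type-preserving action). For rank $2$, given an incident pair $\{G_i g_1, G_j g_2\}$, incidence provides some $h \in G_i g_1 \cap G_j g_2$, so $G_i g_1 = G_i h$ and $G_j g_2 = G_j h$, and the flag is $\rho_h$ applied to the rank-$2$ sub-flag $\{G_i, G_j\}$ of the chamber constructed above. No step looks like a serious obstacle; the only points requiring any real care are the well-definedness of $\rho_g$ and the fact that ``transitive on flags'' must be interpreted within each type class since the action is type-preserving.
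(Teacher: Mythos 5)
Your proof is correct. The paper does not prove this proposition at all --- it is quoted as a classical result of Tits with a citation --- so there is nothing to compare against; your argument (cosets of the same subgroup are equal or disjoint, the trivial cosets $\{G_i : i \in I\}$ form a chamber, right multiplication is a well-defined type-preserving action, and any incident pair $\{G_ig_1, G_jg_2\}$ equals $\{G_ih, G_jh\}$ for any $h$ in the intersection and hence lies in the orbit of $\{G_i,G_j\}$) is the standard one and fills that gap adequately.
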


The incidence system constructed by the proposition above will be denoted by $\Gamma(G; (G_i)_{i\in I})$ and might not be a geometry, but if it is a geometry we call it a {\em coset geometry}.

Given a family of subgroups $(G_i)_{i\in I}$ we define $G_J := \cap_{j \in J}G_j$. The subgroups $G_J$ are called the {\em parabolic subgroups} of the coset geometry $\Gamma(G; (G_i)_{i\in I})$. Moreover $G_j = G_{\{j\}}$ for each $j\in I$ and the subgroups $G_j$ are called the {\em maximal parabolic subgroups} of $\Gamma$.

In the case of flag-transitive coset geometries, there is an easy group-theoretical way to test residual connectedness, which was originally proved by Francis Buekenhout and Michel Hermand.

 \begin{thm}\cite[Corollary 1.8.13]{Diagram Geometry}
\label{proposition:RC}
Suppose \(I\) is finite and let \(\Gamma= \Gamma(G,(G_i)_{i\in I})\) be a geometry over \(I\) on which \(G\) acts flag transitively. Then \(\Gamma\) is residually connected if and only if \(G_J = \langle G_{J\cup\{i\}}~|~i\in I\setminus J\rangle\) for every \(J\subseteq I\) with \(|I\setminus J| \ge 2\).
\end{thm}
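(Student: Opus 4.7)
The plan is to adapt the Buekenhout--Hermand criterion (Theorem~\ref{proposition:RC}) to the chiral setting. The criterion requires the acting group to be flag-transitive, which $G^+$ fails because chirality forces it to have two orbits on the chambers of $\Gamma$. The essential reduction is that, in the chiral case, $G^+$ nevertheless remains transitive on flags of every type $J \subsetneq I$: intuitively, the two chamber orbits should not be distinguished by smaller flags, since every proper flag extends to chambers in both orbits, so no orbit splitting propagates downward from the chamber level.

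Granted this transitivity on proper flags, the rest of the argument proceeds largely as in the classical case. For any flag $F$ of type $J$ with $|I \setminus J| \geq 2$, transitivity on flags of type $J$ lets us identify $\Gamma_F$ with the standard residue $\Gamma(G_J^+,(G_{J\cup\{i\}}^+)_{i \in I \setminus J})$. The proof of Theorem~\ref{proposition:RC} uses flag-transitivity only to make exactly this identification of residues with their standard representative, so its conclusion carries over: $\Gamma_F$ is connected as soon as $G_J^+ = \langle G_{J\cup\{i\}}^+ \mid i \in I \setminus J\rangle$, and the latter equality follows from the $C^+$-intersection condition defining a $C^+$-group. The connectedness of $\Gamma$ itself (the residue of the empty flag) is handled by a direct argument: $G^+$ acts transitively on elements of each type, and the identity cosets $\{G_i^+ \mid i \in I\}$ are pairwise incident (all containing the identity), so any two elements can be joined by a short path through them.

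The main obstacle I anticipate is establishing the transitivity-on-proper-flags claim under the chirality hypothesis. Chirality provides only the negative statement that no automorphism interchanges the two chamber orbits, whereas we need the positive statement that $G^+$ acts with a single orbit on each set of flags of proper type. I would try to prove this by induction on the co-rank $|I \setminus J|$, using Proposition~\ref{tits} to handle co-rank $|I|-1$ and $|I|-2$ and exploiting the $C^+$-intersection property to propagate transitivity upward in $|J|$ while carefully ruling out orbit splittings inherited from the chamber level. This delicate interplay between chirality and the $C^+$-intersection property is, in my expectation, the crux of the argument.
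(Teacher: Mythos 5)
Your proposal does not prove the statement at hand. The statement is the Buekenhout--Hermand criterion itself: for a \emph{flag-transitive} coset geometry $\Gamma(G,(G_i)_{i\in I})$ over a finite type set, residual connectedness is equivalent to the generation condition $G_J = \langle G_{J\cup\{i\}} \mid i\in I\setminus J\rangle$ for every $J$ of corank at least $2$. What you have sketched instead is a proof of the paper's main result (Theorem~\ref{main}), the \emph{chiral} analogue for $C^+$-groups, and in doing so you explicitly invoke Theorem~\ref{proposition:RC} as a known ingredient (``the proof of Theorem~\ref{proposition:RC} uses flag-transitivity only to make exactly this identification\ldots''). As an argument for the statement in question this is circular: you assume the very result you are asked to establish. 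Moreover, chirality, the group $G^+$, and the $C^+$-intersection condition appear nowhere in the hypotheses of the statement, which concerns an arbitrary flag-transitive coset geometry; and your sketch addresses only the sufficiency of the generation condition for connectedness of residues, saying nothing about the converse direction, which is part of the claimed equivalence.

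For the record, the paper gives no proof of this statement either; it is quoted verbatim from Buekenhout and Cohen (Corollary 1.8.13 of \cite{Diagram Geometry}). A genuine proof would have to argue both directions: that under flag-transitivity each residue $\Gamma_F$ of type complement $J$ is isomorphic (via the map $\varphi_J$ of Lemma~\ref{lemma1.8.9}) to the coset incidence system $\Gamma(G_J,(G_{J\cup\{i\}})_{i\in I\setminus J})$, whose connectedness is precisely the condition $G_J = \langle G_{J\cup\{i\}} \mid i\in I\setminus J\rangle$; and conversely that connectedness of every residue of corank at least $2$ forces that generation condition. None of this is present in your proposal, which is aimed at a different theorem.
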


A geometry $\Gamma$ is {\em chiral} if $Aut_I(\Gamma)$ has two orbits on the chambers such that any two adjacent chambers lie in distinct orbits.
 
Observe that if a geometry is chiral, it is necessarily thin as if a rank one residue contains more than two elements, this contradicts chirality.
 
\section{Hypertopes}\label{hypertopes}
Most of the definitions in this section come from~\cite{Hypertopes}.
 A \emph{hypertope} is a thin, residually connected geometry. 
 A hypertope $\Gamma$ is {\em regular} if $\Gamma$ is a flag-transitive geometry.
 A hypertope $\Gamma$ is {\em chiral} if $\Gamma$ is a chiral geometry.
 
 Let $\Gamma(X,*,t,I)$ be a thin geometry and $i\in I$. If $C$ is a chamber of $\Gamma$, we let $C_i$ denote the chamber {\em $i$}-adjacent to $C$, that  is, the chamber that differs from $C$ only in its $i$-face. 
 
\subsection{C-groups and regular hypertopes}

Given a regular hypertope $\Gamma$ and a chamber $C$ of $\Gamma$, for each $i\in I$ let $\rho_i$ denote the automorphism mapping $C$ to $C_i$.
Then $\{\rho_0,\ldots, \rho_{n-1}\}$ is a generating set for $Aut_I(\Gamma)$ and $G_i=\langle \rho_j\,|\, j\neq i\rangle$ is the stabilizer of the $i$-face of $C$. 
Moreover $(Aut_I(\Gamma),\{ \rho_0,\ldots, \rho_{n-1}\})$ is a \emph{C-group}  \cite[Theorem~4.1]{Hypertopes}, that is, $\{ \rho_0,\ldots, \rho_{n-1}\}$ is a set of involutions generating $Aut_I(\Gamma)$ and satisfying the following condition, called the \emph{intersection condition}.
$$\forall I, J \subseteq \{0, \ldots, n-1\},
\langle \rho_i \mid i \in I\rangle \cap \langle \rho_j \mid j \in J\rangle = \langle \rho_k \mid k \in I \cap J\rangle.$$

From a C-group we can get a hypertope when the incidence system arising from Proposition~\ref{tits} is flag-transitive, as shown in the following theorem.

\begin{thm}\cite[Theorem 4.6]{Hypertopes}\label{theorem46}
Let $G=\langle \rho_0, \ldots, \rho_{n-1}\rangle$ be a C-group of rank $n$ and let $\Gamma := \Gamma(G;(G_i)_{i\in I})$ with $G_i := \langle \rho_j | j \in I\setminus \{i\} \rangle$ for all $i\in I:=\{0, \ldots, n-1\}$.
If $G$ is flag-transitive on $\Gamma$, then $\Gamma$ is a regular hypertope.
\end{thm}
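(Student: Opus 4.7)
My plan is to verify the two conditions in the definition of residual connectedness separately and to proceed by induction on the rank $|I|$: first, I would verify that the incidence graph of $\Gamma$ itself is connected, and second, that each residue of rank at least two has a connected incidence graph. The induction handles residues by identifying each one with the coset geometry of a smaller group, whose geometry is then either chiral (applying the inductive hypothesis) or regular (applying Theorem \ref{theorem46}).

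For the connectedness of $\Gamma$, I would use the base chamber $C_0 := (G_i^+)_{i\in I}$, whose elements are pairwise incident through $e$ and therefore form a clique in the incidence graph. Since $G^+$ is transitive on the cosets $G_i^+ g$, every element of $\Gamma$ has the form $G_i^+ g$ for some $g \in G^+$, and it suffices to walk from $C_0$ to $C_0 g$ in the chamber-adjacency graph (where chambers are adjacent if they share at least one face). Writing $g = r_1 r_2 \cdots r_\ell$ as a word in $R$ and using that each $r \in R$ lies in some maximal parabolic $G_{j(r)}^+$ (a direct consequence of the $C^+$ intersection condition), the partial products $r_1\cdots r_k$ produce a chain of chambers in which consecutive entries share at least the $j(r_{k+1})$-face, yielding the required walk.

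For residues, let $F$ be a proper flag of type $J \subseteq I$ with $|I \setminus J| \geq 2$. I would identify $\Gamma_F$ with the coset geometry of $G_F^+ := \bigcap_{j\in J} G_j^+$ over the maximal parabolics $G_F^+ \cap G_i^+$ for $i \in I \setminus J$, and show via the $C^+$ intersection property that $G_F^+$, with an appropriately induced generating set, is again a $C^+$-group of rank $|I \setminus J|$. Since chirality forces adjacent chambers of $\Gamma$ to lie in different $G^+$-orbits, the chambers of $\Gamma_F$ are split between the two orbits, so $G_F^+$ has either one or two orbits on chambers of $\Gamma_F$. In the two-orbit case, $\Gamma_F$ is chiral and the inductive hypothesis applies. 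In the one-orbit case, $G_F^+$ acts flag-transitively on $\Gamma_F$; the rotations together with the $i$-adjacency involutions (which flag-transitivity forces into $G_F^+$) give a C-group structure on $G_F^+$, and Theorem \ref{theorem46} applies. The base case $|I \setminus J| = 2$ is handled by checking that the $C^+$ intersection condition on $G_F^+$ forces the rank-two incidence graph to be a single cycle rather than a disjoint union of cycles.

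The main obstacle is making the residue analysis rigorous. Verifying that $G_F^+$ with the natural induced generators satisfies the $C^+$ intersection property requires a careful application of the intersection condition of the ambient $(G^+, R)$; and in the one-orbit residue case, one must show that $G_F^+$ actually contains the relevant $i$-adjacency involutions, so that it becomes a C-group on which Theorem \ref{theorem46} can be legitimately invoked. Both of these are essentially bookkeeping arguments about how the two global chamber orbits project down to stabilizers, but they are the technical heart of the proof.
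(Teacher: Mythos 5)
Your proposal does not address the statement you were asked to prove. Theorem~\ref{theorem46} concerns \emph{C-groups}: a group $G$ generated by involutions $\rho_0,\ldots,\rho_{n-1}$ satisfying the intersection condition, with maximal parabolics $G_i = \langle \rho_j \mid j \neq i\rangle$, and the claim is that flag-transitivity of $G$ on the coset geometry forces $\Gamma$ to be a regular hypertope, i.e.\ a thin, residually connected, flag-transitive geometry. What you have written is instead a proof sketch for the paper's main result (Theorem~\ref{main}): that a \emph{chiral} coset geometry arising from a \emph{$C^+$-group} $(G^+,R)$ is residually connected. Everything in your argument --- the generators $\alpha_{ij}$, the two chamber orbits, the case split between chiral and flag-transitive residues --- belongs to that other theorem. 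Worse, your argument explicitly \emph{invokes} Theorem~\ref{theorem46} to handle the flag-transitive residues, so even if one tried to read it as a proof of Theorem~\ref{theorem46} it would be circular.

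For the record, a correct proof of Theorem~\ref{theorem46} (it is quoted from \cite{Hypertopes} and not reproved in this paper) runs along entirely different lines: one checks that $\Gamma$ is a geometry and is thin using flag-transitivity together with the fact that $\rho_i \notin G_i$ (so each rank-one residue has exactly two elements), and one obtains residual connectedness from the Buekenhout--Hermand criterion (Theorem~\ref{proposition:RC}), whose hypothesis $G_J = \langle G_{J\cup\{i\}} \mid i \in I\setminus J\rangle$ is exactly what the C-group intersection condition on the parabolic subgroups delivers. None of the machinery about orbits, adjacency, or $C^+$-groups is needed there. I would encourage you to redo the exercise for the statement actually given; separately, your sketch of the chiral case has its own issues (for instance, a rank-two residue of a chiral geometry cannot be handled by assuming $G_F^+$ has ``one or two orbits on its chambers'' without first establishing that the residue map $\varphi_J$ is surjective, which is the content of Lemma~\ref{lemma:varphi} and Proposition~\ref{chiral_final} in the paper), but that is a discussion for the other theorem.
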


Observe that Theorem~\ref{main} is the equivalent of the latter theorem for chiral geometries.

\subsection{C$^+$-groups}\label{section6}

We now consider another class of groups 
from which we will be able to construct hypertopes. These hypertopes may or may not be regular. In the latter case, they will be chiral. 

Consider a pair $(G^+,R)$ with $G^+$ being a group and $R:=\{\alpha_1, \ldots, \alpha_{r-1}\}$ a set of generators of $G^+$.
Define $\alpha_0:=1_{G^+}$
 and $\alpha_{ij} := \alpha_i^{-1}\alpha_j$ for all $0\leq i,j \leq r-1$.
 {\color{black}Observe that $\alpha_{ji} = \alpha_{ij}^{-1}$.}
Let $G^+_{\color{black}J} := \langle \alpha_{ij} \mid i,j \in {\color{black}J}\rangle$ for ${\color{black}J}\subseteq \{0, \ldots, r-1\}$.

If the pair $(G^+,R)$ satisfies condition (\ref{IC+}) below called the {\em {\color{black}intersection condition}} {\color{black}IC}$^+$, we say that $(G^+,R)$ is a {\em $C^+$-group}.
\begin{equation}\label{IC+}
\forall {\color{black}J, K} \subseteq \{0, \ldots, r-1\}, with \;{\color{black}|J|, |K|} \geq 2,
G^+_{\color{black}J} \cap G^+_{\color{black}K} = G^+_{{\color{black}J\cap K}}.
\end{equation}
It follows immediately from the {\color{black}intersection condition} {\color{black}IC}$^+$, that $R$ is an independent generating set for $G^+$, that means that $\alpha_i \not\in \langle \alpha_j : j \neq i\rangle$.

We now explain how to construct a coset geometry from a group and an independent generating set of this group.

\begin{construction}\label{hyper}
Let $I=\{1,\ldots, r-1\}$,  $G^+$ be a group and {\color{black}$R:=\{\alpha_1,\ldots, \alpha_{r-1}\}$} be an independent generating set of $G^+$.
Define $G^+_i := \langle \alpha_j | j \neq i \rangle$ for $i=1, \ldots, r-1$ and $G^+_0 := \langle \alpha_1^{-1}\alpha_j | j \geq 2 \rangle$.
The coset geometry $\Gamma(G^+,R) := \Gamma(G^+; (G^+_i)_{i\in \{0,\ldots,r-1\}})$ constructed using Tits' algorithm (see Proposition~\ref{tits}) is the geometry associated to the pair $(G^+,R)$. 
\end{construction}

The coset geometry $\Gamma(G^+,R)$ gives an incidence system using Tits algorithm. 
If this incidence system is a chiral hypertope, then $(G^+,R)$ is necessarily a $C^+$-group by the following theorem.

\begin{thm}\cite[Theorem 7.1]{Hypertopes}\label{cplusgroup}
Let $I:=\{0, \ldots, r-1\}$ and let $\Gamma$ be a chiral hypertope of rank $r$. Let $C$ be a chamber of $\Gamma$.
The pair $(G^+,R)$ where $G^+=Aut_I(\Gamma)$ and $R$ is the set of distinguished generators of $G^+$ with respect to $C$ is a $C^+$-group.
\end{thm}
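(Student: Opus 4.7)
The plan is to equip each parabolic subgroup with a geometric interpretation, namely to prove that for every $J\subseteq I$ with $|J|\ge 2$,
\[
G^+_J \;=\; \mathrm{Stab}_{G^+}(\Phi_J), \qquad \Phi_J := \{F_k \mid k\in I\setminus J\},
\]
where $F_k$ denotes the $k$-face of $C$ and $\Phi_J$ is the subflag of $C$ of types complementary to $J$. Granting this identification, $IC^+$ becomes a set-theoretic triviality: for all $J,K\subseteq I$ with $|J|,|K|\ge 2$,
\[
G^+_J \cap G^+_K \;=\; \mathrm{Stab}(\Phi_J)\cap\mathrm{Stab}(\Phi_K) \;=\; \mathrm{Stab}(\Phi_J\cup\Phi_K) \;=\; \mathrm{Stab}(\Phi_{J\cap K}) \;=\; G^+_{J\cap K}.
\]
When $|J\cap K|<2$, the right-hand side is trivial; the same computation still applies, since by regularity of $G^+$ on its orbit (see below) no nontrivial element of $G^+$ can fix the larger subflag $\Phi_{J\cap K}$.

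For the easy inclusion $G^+_J\subseteq\mathrm{Stab}(\Phi_J)$ I would check on generators. By construction, $\alpha_i$ sends $C$ to the chamber $(C^0)^i$ obtained from $C$ by successive $0$- and $i$-adjacencies. Thus $\alpha_i$ fixes $F_k$ for all $k\notin\{0,i\}$ and sends $F_0$ to the $0$-face $F'_0$ of the $0$-adjacent chamber $C^0$, an element independent of $i$. Consequently $\alpha_{ij}=\alpha_i^{-1}\alpha_j$ fixes $F_k$ for $k\notin\{0,i,j\}$ \emph{and} fixes $F_0$, because $\alpha_i$ and $\alpha_j$ agree on $F_0$. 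Hence every generator of $G^+_J$ stabilizes $\Phi_J$.

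The reverse inclusion $\mathrm{Stab}_{G^+}(\Phi_J)\subseteq G^+_J$ is where chirality does the real work. Chirality implies that $G^+$ has exactly two orbits on chambers and acts regularly on each, so the only element of $G^+$ fixing $C$ is the identity. Given $g\in\mathrm{Stab}_{G^+}(\Phi_J)$, it therefore suffices to find $h\in G^+_J$ with $h(C)=g(C)$, for then $gh^{-1}$ fixes $C$ and is trivial. Since $g$ fixes $\Phi_J$, the chamber $g(C)$ extends $\Phi_J$ (so lies in the residue $\Gamma_{\Phi_J}$) and belongs to the $G^+$-orbit of $C$. The whole problem reduces to showing that $G^+_J$ acts transitively on the chambers of $\Gamma_{\Phi_J}$ lying in the $G^+$-orbit of $C$.

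This residue-transitivity is the main obstacle, and I would prove it by induction on $|J|$. The base case $|J|=2$ is a short direct verification from the definition of the generators inside a rank-$2$ residue. For the inductive step, residual connectedness of $\Gamma$ produces, from $C$ to any target chamber of $\Gamma_{\Phi_J}$ in the orbit of $C$, a path of successive adjacencies using types in $J$; chirality forces this path to have even length. Grouping the adjacencies in consecutive pairs, each pair moves from a chamber in the orbit of $C$ to another such chamber, via an intermediate chamber in the opposite orbit. Applying the inductive hypothesis to residues of suitable intermediate subflags of strictly smaller corank, I realize each such pair as the action of some $\alpha_{ij}^{\pm 1}$, possibly conjugated by an element already in $G^+_J$. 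Composing these yields the required $h\in G^+_J$. The delicate step — and where essentially all the work lies — is to show that this pairing can always be carried out consistently so that the accumulated word genuinely lies in $G^+_J$; this is the hypertope analogue of the Schulte–Weiss argument for chiral abstract polytopes, with the role played there by $2$-faces replaced by arbitrary rank-$2$ residues of $C$.
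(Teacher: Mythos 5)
This theorem is imported from \cite[Theorem 7.1]{Hypertopes}; the present paper states it without proof, so there is no in-paper argument to compare against. Measured against the original proof, your proposal follows the same strategy: identify each parabolic $G^+_J$ with the pointwise stabilizer of the coflag $\Phi_J\subseteq C$, deduce IC$^+$ from $\mathrm{Stab}(\Phi_J)\cap\mathrm{Stab}(\Phi_K)=\mathrm{Stab}(\Phi_{J\cap K})$, and reduce the hard inclusion to transitivity of $G^+_J$ on the chambers of $\Gamma_{\Phi_J}$ lying in the orbit of $C$. That outline is correct, and I have only two remarks. First, the step you flag as ``delicate'' is not: since $\Gamma$ is a hypertope, any target chamber is joined to $C$ by a gallery inside the residue, of even length by chirality; a direct induction on the number of pairs of steps shows that if $C_{2m-2}=h'(C)$ with $h'\in G^+_J$ and the next two adjacencies have types $i,j\in J$, then thinness and the identity $\alpha_{ij}(C)=(C^i)^j$ give $C_{2m}=h'\alpha_{ij}(C)$. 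No consistency problem in the pairing can arise, and no auxiliary induction on $|J|$ or appeal to intermediate residues is needed; you do, however, need to say explicitly that chamber-connectedness of the residue follows from residual connectedness and firmness, and that triviality of the chamber stabilizer follows from thinness plus chamber-connectedness. Second, your treatment of $|J\cap K|=1$ is slightly off: regularity on the orbit only kills the corank-zero case, while for a corank-one subflag $\Phi_{\{j\}}$ you must argue that an element fixing it sends $C$ to $C$ or to $C^j$, and the latter is excluded because $C^j$ lies in the other orbit while any $g\in G^+$ preserves the orbit of $C$. Both are easy repairs, so the proposal is sound.
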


\begin{coro}\cite[Corollary 7.2]{Hypertopes}
The set $R$ of Theorem~\ref{cplusgroup} is an independent generating set for $G^+$. 
\end{coro}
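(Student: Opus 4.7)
The plan is to derive the statement directly from the intersection condition IC$^+$, as already advertised in the paragraph immediately following the definition of $C^+$-groups. By Theorem~\ref{cplusgroup} the pair $(G^+,R)$ already satisfies IC$^+$, and $R$ generates $G^+$ by definition of ``distinguished generators'', so the only thing to check is independence of $R$, and this should follow as a purely formal consequence of IC$^+$.

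Fix $i \in \{1, \ldots, r-1\}$ and suppose, for contradiction, that $\alpha_i$ lies in $H := \langle \alpha_j \mid j \in \{1,\ldots,r-1\}\setminus\{i\}\rangle$. Set $J := \{0,1, \ldots, r-1\} \setminus \{i\}$ and $K := \{0, i\}$. Since $\alpha_{0j}=\alpha_j$ for every $j \in J\setminus\{0\}$, and since $\alpha_{kl}=\alpha_k^{-1}\alpha_l$ lies in $H$ for all $k,l \in J$, a direct check gives $G^+_J = H$. Likewise $G^+_K = \langle \alpha_{0i}\rangle = \langle \alpha_i\rangle$, while $G^+_{J\cap K} = G^+_{\{0\}} = \langle \alpha_{00}\rangle = \{1_{G^+}\}$.

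Assuming $r \geq 3$, we have $|J|,|K|\geq 2$, so IC$^+$ applies and yields $G^+_J \cap G^+_K = G^+_{J\cap K} = \{1_{G^+}\}$. Combining $\alpha_i \in H = G^+_J$ with $\alpha_i \in G^+_K$ would then force $\alpha_i = 1_{G^+}$, contradicting the fact that $\alpha_i$ is a nontrivial element of $G^+$ (the distinguished generators of a chiral hypertope move the base chamber $C$ to pairwise distinct chambers, so they are in particular nontrivial). The degenerate case $r=2$ is immediate, since $R=\{\alpha_1\}$ and independence only requires $\alpha_1 \neq 1_{G^+}$.

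The only potential subtlety is verifying $\alpha_i \neq 1_{G^+}$; but this is intrinsic to the setup of Theorem~\ref{cplusgroup}, where the $\alpha_i$ are built from $i$-adjacencies of $C$ in a thin geometry and are therefore pairwise distinct (and nontrivial). No other obstacle arises, so the corollary reduces to the short formal deduction above.
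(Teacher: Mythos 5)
Your argument is correct and follows exactly the route the paper intends: the corollary is quoted from \cite[Corollary 7.2]{Hypertopes} without a written proof here, but the paper explicitly remarks after defining IC$^+$ that independence of $R$ ``follows immediately from the intersection condition,'' and your choice of $J=\{0,\ldots,r-1\}\setminus\{i\}$ and $K=\{0,i\}$ (giving $G^+_J\cap G^+_K=G^+_{\{0\}}=\{1_{G^+}\}$, hence $\alpha_i=1_{G^+}$ if $\alpha_i$ were redundant) is precisely the formal deduction behind that remark. Your verification that the $\alpha_i$ are nontrivial, and your separate treatment of $r=2$, are both sound.
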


\section{Residual connectedness of chiral hypertopes}
\label{last}

As we saw in the previous section, if a coset geometry $\Gamma =\Gamma(G,(G_i)_{i\in \{1,\ldots,n\}})$ is flag-transitive, it is fairly easy to verify if it is residually connected. Not only do we know that it is sufficient to check every residue of the base chamber \((G_0,...,G_n)\), Theorem~\ref{proposition:RC} even gives us a criterion for residual connectedness based only on the maximal parabolic subgroups \((G_i)\)'s and their intersections. The goal of this section is to prove Theorem~\ref{main} which gives a similar result for chiral hypertopes.

The fact that is it sufficient to check the residues contained in a base chamber is actually pretty straightforward as the following theorem shows.

\begin{thm}
 Let \(\Gamma\) be a chiral geometry and let \( C\) be a chamber of \(\Gamma\). Then \(\Gamma\) is residually connected if and only if \(\Gamma_{F}\) is connected  for all flags F of C of corank at least $2$.
\end{thm}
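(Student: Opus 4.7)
The plan is to prove both directions, with the forward implication being immediate and the reverse implication being the substantive one that uses chirality.

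The \emph{only if} direction is immediate: if $\Gamma$ is residually connected, then by definition every residue of rank at least $2$ is connected, and in particular this applies to $\Gamma_F$ for any flag $F$ of $C$ of corank at least $2$ (the residue has rank equal to the corank of $F$). Note that the case $F=\emptyset$ recovers connectedness of $\Gamma$ itself.

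For the \emph{if} direction, let $F'$ be an arbitrary flag of $\Gamma$ of corank at least $2$; the task is to show $\Gamma_{F'}$ is connected using only the hypothesis on flags of $C$. The strategy is to use the $\Aut_I(\Gamma)$-action together with chirality to move $F'$ inside $C$. First, since $\Gamma$ is a geometry, $F'$ extends to some chamber $C'$. Chirality tells us that $\Aut_I(\Gamma)$ has exactly two orbits on chambers and that $i$-adjacent chambers lie in different orbits. If $C'$ lies in the same orbit as $C$, pick $g\in\Aut_I(\Gamma)$ with $g(C')=C$; otherwise, use corank at least $2$ to select some $i\in I\setminus t(F')$ and replace $C'$ by its $i$-adjacent chamber $C''$, which still contains $F'$ (since $i$-adjacency only changes the $i$-face) but now lies in the orbit of $C$, and take $g$ with $g(C'')=C$. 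Either way, $g(F')$ is a flag of $C$ of the same corank as $F'$, hence of corank at least $2$.

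Finally, since type-preserving automorphisms preserve incidence and types, $g$ induces an isomorphism of incidence geometries between $\Gamma_{F'}$ and $\Gamma_{g(F')}$; in particular, $\Gamma_{F'}$ is connected if and only if $\Gamma_{g(F')}$ is connected. By hypothesis, $\Gamma_{g(F')}$ is connected, so $\Gamma_{F'}$ is connected, as required. The only step that deserves care is verifying that the orbit swap is possible without disturbing $F'$; this is precisely where the corank at least $2$ assumption is used (we need at least one type $i\notin t(F')$ available for an $i$-adjacency), and it is the only real obstacle, all other steps being direct applications of the definitions of chirality and of residue.
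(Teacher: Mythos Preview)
Your proof is correct and follows essentially the same approach as the paper's: both exploit that chirality gives exactly two chamber orbits with adjacent chambers in opposite orbits, and both use a type $i\notin t(F')$ to land the flag inside $C$. The only cosmetic difference is that you swap to an $i$-adjacent chamber on the source side ($C'\to C''$) before mapping to $C$, whereas the paper maps $C_1$ directly to $C^i$, the $i$-adjacent chamber of the target $C$; either way the image of the flag sits in $C$ since the $i$-face is not in its type.
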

\begin{proof}
 If \(\Gamma\) is residually connected, then, obviously, \(\Gamma_F\) is connected for all flags F \(\in \Gamma\) of corank at least $2$.

Now, let F be a flag of \(\Gamma\) of corank at least 2 that is not necessarily included in $C$ and let us prove that \(\Gamma_F\) is connected. The flag F is contained in a chamber \(C_{1}\) of \(\Gamma\) since \(\Gamma\) is a geometry. If \(C_{1}\) is in the same orbit as \(C\), then there exists an \(\alpha \in Aut_{I}(\Gamma)\) sending \(C_{1}\) to \(C\). Then, this \(\alpha\) sends F on a flag \(\alpha(F)\) of \(C\) and \(\Gamma_{F}\) is isomorphic to \(\Gamma_{\alpha(F)}\) which is connected by hypothesis.
Suppose now \(C_{1}\) is not in the same orbit as \(C\), and take a type \(i\in I\) such that \(i \notin J\). Then \(C_{1}\) is in the same orbit as \(C^i\), the \(i\)-adjacent chamber to \(C\).
Then again, we can find \(\alpha \in Aut_I(\Gamma)\) sending \(C_1\) to \(C^i\). Now notice that, since \(C^i\) coincides with \(C\) except for the element of type \(i\), \(\alpha(F)\) actually maps F onto a flag \(\alpha(F) \in C\). By the same reasoning as before, we obtain that \(\Gamma_F\) is then connected.
We have proved that \(\Gamma_F\) is connected for all flags F \(\in\Gamma\) with corank at least 2, thus \( \Gamma\) is residually connected.
\end{proof}
The proof of the above theorem suggests the following result.
\begin{prop}\label{jnoti}
Let \(\Gamma\) be a chiral geometry over \(I = \{1,...,n\}\). Then \(Aut_I(\Gamma)\) is transitive on the set of flags of type \(J\) for any \(J\subseteq I\) such that \(|J| < |I|\). 
\end{prop}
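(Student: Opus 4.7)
The plan is to imitate the trick used in the proof of the preceding theorem: whenever two chambers extending flags of type $J$ live in different orbits of $Aut_I(\Gamma)$, we can repair this by passing to an $i$-adjacent chamber for some $i\in I\setminus J$, which is legal precisely because the $i$-face does not belong to a flag of type $J$.

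More concretely, let $F_1$ and $F_2$ be two flags of type $J$, and fix chambers $C_1\supseteq F_1$ and $C_2\supseteq F_2$; these exist since $\Gamma$ is a geometry. Choose $i\in I\setminus J$, which is possible because $|J|<|I|$. The first case is that $C_1$ and $C_2$ lie in the same $Aut_I(\Gamma)$-orbit: then there exists $\alpha\in Aut_I(\Gamma)$ with $\alpha(C_1)=C_2$, and since $\alpha$ is type-preserving, $\alpha(F_1)$ is a flag of type $J$ contained in $C_2$. A chamber has a unique element of each type, so the flag of type $J$ inside $C_2$ is unique, forcing $\alpha(F_1)=F_2$.

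The second case is that $C_1$ and $C_2$ lie in different orbits. Replace $C_2$ by its $i$-adjacent chamber $(C_2)^i$. Because chirality forces adjacent chambers to lie in distinct orbits, $(C_2)^i$ is in the same orbit as $C_1$. Moreover, since $i\notin J$, the chambers $C_2$ and $(C_2)^i$ share all their faces of types in $J$, so $F_2\subseteq (C_2)^i$ as well. We are now reduced to the first case, applied to $C_1$ and $(C_2)^i$, which yields an element of $Aut_I(\Gamma)$ sending $F_1$ to $F_2$.

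I do not expect any serious obstacle; the entire argument hinges on the two elementary observations that (i) a type-preserving automorphism taking one chamber onto another is automatically forced, by the uniqueness of faces of a given type in a chamber, to match up their flags of any fixed type, and (ii) when $|J|<|I|$ there is always a type $i\notin J$ available so that the $i$-adjacent chamber still extends the same flag of type $J$. The hypothesis $|J|<|I|$ is exactly what guarantees this degree of freedom, and chirality guarantees that the two chamber orbits are interchanged by $i$-adjacency.
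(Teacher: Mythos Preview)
Your proof is correct and follows essentially the same approach as the paper. The only cosmetic difference is that the paper produces two chambers $C_1,C_2$ in different orbits containing the first flag $F$ and then matches one of them to a single chamber containing $F'$, whereas you fix one chamber on each side and, if necessary, swap $C_2$ for its $i$-adjacent chamber; the underlying idea---using $i\in I\setminus J$ to toggle orbits while preserving the type-$J$ flag---is identical.
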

\begin{proof}
Take two flags \(F\) and \(F'\) of type \(J\). Then \(F\) is contained in at least two chambers \(C_1\) and \(C_2\) lying in different orbit. Take \(C'\) a chamber of \(\Gamma\) containing \(F'\).
Then either \(C_1\) and \(C'\) or \(C_2\) and \(C'\) are in the same orbit. In any case, there exist an element \(\rho \in Aut_I(\Gamma)\) such that \(\rho(C_i) = C'\) for \( i = 1\) or \(2\) and thus \(\rho(F) = F'\).
\end{proof}

In other words, \(Aut_I(\Gamma)\) is transitive on every set of flags of a given type, except on chambers, since there are exactly \(2\) orbits for those.

We just proved that we only need to check that all the residues of rank at least two of a given chamber are connected for a chiral geometry to be residually connected. This concludes the first part of our analogy with the regular case. 

We now focus on coset geometries that are chiral and try to find conditions on the maximal parabolic subgroups for these geometries to be residually connected.

Let \((G^+,R)\) be a \(C^+\)-group with set of generators \(R =\{\alpha_1,\alpha_2, ...,\alpha_{n-1}\}\).
We use  Construction~\ref{hyper} to create the incidence geometry \(\Gamma = (G^+, \{G_0^+, G_1^+, ..., G_{n-1}^+\})\) from \(G^+\).
We would like to find a condition for the residual connectedness of \(\Gamma\) which would only involve subgroups of \(G^+\), in analogy to the condition existing for flag-transitive coset geometries.

The main problem lies in the fact that little to nothing is known, in the chiral case, on how to express the residues of \(\Gamma\) in terms of coset geometries derived from \(G^+\)  while everything works perfectly fine in the regular case as we previously saw.

Let us recall here a Lemma from~\cite{Diagram Geometry}. 
\begin{lemma}[\cite{Diagram Geometry}, Lemma 1.8.9]\label{lemma1.8.9}
Let \(\Gamma = \Gamma(G, (G_i)_{i \in I})\) be the coset incidence system of \(G\) over \( (G_i)_{i \in I}\).
Then, for each \(J \subseteq I\), there is a natural injective homomorphism of incidence systems over \(I\setminus J\) 
\begin{center}
\( \varphi_J :  \Gamma(G_J,(G_{J\cup \{i\}})_{i\in I}) \mapsto \Gamma_{G_j | j\in J} \)
\end{center}
given by \(\varphi(aG_{J \cup \{i\}  }) = aG_i\) \((a \in G_J, i\in I\setminus J)\).
Furthermore, given \(J \subseteq I\), the homomorphism \(\varphi_J\) is surjective if and only if, for all \(i \in I\setminus J \), we have \( \bigcap_{j \in J} (G_j G_i) = G_J G_i \) and if \(\varphi_J\) is surjective for all \(J \subseteq I\), then \(\varphi_J\) is an isomorphism for all \(J\subseteq I \).
\end{lemma}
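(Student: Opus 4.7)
The plan is to verify the three assertions---that $\varphi_J$ is a well-defined injective homomorphism, the stated surjectivity criterion, and that global surjectivity forces each $\varphi_J$ to be an isomorphism---in that order. Everything rests on the defining identity $G_{J}=\bigcap_{j\in J}G_j$ and elementary coset manipulations.

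For the first assertion, I would check each property directly. Well-definedness is immediate from $G_{J\cup\{i\}}\subseteq G_i$; the image of $aG_{J\cup\{i\}}$ lies in the residue of $\{G_j:j\in J\}$ because $a\in G_J\subseteq G_j$ gives $a\in aG_i\cap G_j$ for each $j\in J$; incidence is preserved since $aG_{J\cup\{i\}}\cap bG_{J\cup\{k\}}\subseteq aG_i\cap bG_k$; and injectivity on a fixed type $i$ follows because $aG_i=bG_i$ with $a,b\in G_J$ forces $a^{-1}b\in G_i\cap G_J=G_{J\cup\{i\}}$ (injectivity across distinct types is automatic).

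For the surjectivity criterion, an element $xG_i$ with $i\notin J$ lies in the residue exactly when $x\in\bigcap_{j\in J}G_jG_i$, and it lies in the image of $\varphi_J$ exactly when $x\in G_JG_i$. Since $G_JG_i\subseteq\bigcap_{j\in J}G_jG_i$ is automatic, surjectivity at type $i$ is precisely the reverse containment, which is the stated condition.

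The main work is the isomorphism claim: assuming global surjectivity, show that the inverse of the bijection $\varphi_J$ also preserves incidence. Suppose $aG_i\cap bG_k\neq\emptyset$ with $a,b\in G_J$. First I translate by $a^{-1}\in G_J$, which is a type-preserving automorphism of $\Gamma$ fixing each $G_j$ ($j\in J$), reducing to $a=1$. Then $b\in G_J$ and $b\in G_iG_k$, while $b\in G_jG_k$ for each $j\in J$, so $b\in\bigcap_{j\in J\cup\{i\}}G_jG_k$. Surjectivity of $\varphi_{J\cup\{i\}}$ (combined with the criterion just proved) gives $\bigcap_{j\in J\cup\{i\}}G_jG_k=G_{J\cup\{i\}}G_k$, so I can write $b=uv$ with $u\in G_{J\cup\{i\}}$ and $v\in G_k$. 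Because $b\in G_J$ and $u\in G_{J\cup\{i\}}\subseteq G_J$, the quotient $v=u^{-1}b$ lies in $G_J\cap G_k=G_{J\cup\{k\}}$; hence $b\in G_{J\cup\{i\}}G_{J\cup\{k\}}$, which is exactly $G_{J\cup\{i\}}\cap bG_{J\cup\{k\}}\neq\emptyset$. The subtle point---and the main obstacle---is noticing that the isomorphism at a particular $J$ relies on surjectivity of $\varphi_{J\cup\{i\}}$ at the strictly larger subsets, not on surjectivity at $J$ itself, which is precisely why the hypothesis must hold globally over all subsets of $I$.
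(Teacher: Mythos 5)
Your proof is correct, and it follows the standard argument: the paper itself does not prove this lemma but imports it verbatim from Buekenhout--Cohen (Lemma 1.8.9), and your verification (well-definedness and injectivity from $G_J\cap G_i=G_{J\cup\{i\}}$, the double-coset criterion $\bigcap_{j\in J}G_jG_i=G_JG_i$ for surjectivity, and the reduction of the inverse-incidence check to the factorization $b=uv$ with $u\in G_{J\cup\{i\}}$, $v\in G_{J\cup\{k\}}$) is exactly the route taken there. In particular you correctly isolate the one genuinely delicate point, namely that the isomorphism property of $\varphi_J$ is obtained from surjectivity of $\varphi_{J\cup\{i\}}$ rather than of $\varphi_J$ itself, which is why the hypothesis must be assumed for all subsets of $I$.
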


\begin{remark}
In particular, for \(J = \{j\}\) for some \( j\in I\), we have that \(\varphi_{\{j\}}\) is always a bijective homomorphism. This means, that for geometries of rank three, we always have a bijective homomorphism for residues of rank two. Here, it is important to note that a bijective homomorphism does not need to be an isomorphism. It is an isomorphism if and only if its inverse is also a bijective homomorphism.
\end{remark}

In the regular case, all those \(\varphi_j\)'s are isomorphisms and we therefore know that every residue of type \(J\) is isomorphic to \((G_J, (G_{J\cup \{i\}})_{i\in I\setminus J})\), which gives us a perfect description of the residues. We also know that, unfortunately, this cannot be the case for chiral geometries, since it can be easily proven that if every \(\varphi_J\) is an isomorphism, then \(G\) has to act flag-transitively on \(\Gamma\). 
Nonetheless, almost all of those \(\varphi_J\) are still surjective, and thus bijective, in the chiral case, as shown in the following result.

\begin{lemma}
\label{lemma:varphi}
Let \(\Gamma= \Gamma(G,(G_i)_{i\in I})\) be a coset incidence geometry over \(I\) and fix \(J\subseteq I\setminus \{i\}\). Then, \(G\) is transitive on the set of flag of type \(\{i\}\cup J\) if and only if \(G\) is transitive on the set of flags of type \(J\) and \(\bigcap_{j\in J}G_i  G_j=G_i G_J\).
\end{lemma}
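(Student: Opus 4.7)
The plan is to unwind the definition of coset geometry and factor the $G$-action on flags of type $\{i\}\cup J$ through the projection that forgets the $i$-coordinate. Recall that an element of type $k$ is a coset $G_k x$, that two cosets $G_i x$ and $G_j y$ are incident if and only if $xy^{-1}\in G_j G_i$, and that $G$ acts by right multiplication. In particular, under this action the stabilizer of the base flag $(G_j)_{j\in J}$ is exactly $G_J=\bigcap_{j\in J}G_j$, and the $i$-elements incident to that base flag are precisely the cosets $G_i g$ with $g\in\bigcap_{j\in J}G_iG_j$. These two facts are the bookkeeping backbone of the whole argument.

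For the forward direction, I would first observe that transitivity of $G$ on flags of type $J$ follows immediately from transitivity on flags of type $\{i\}\cup J$, since every $J$-flag extends to an $(\{i\}\cup J)$-flag because $\Gamma$ is a geometry, and the projection commutes with the $G$-action. For the identity $\bigcap_{j\in J}G_iG_j = G_iG_J$, the inclusion $\supseteq$ is trivial. For $\subseteq$, I would pick $g\in\bigcap_{j\in J}G_iG_j$, form the flag $(G_i g,(G_j)_{j\in J})$, apply transitivity on $(\{i\}\cup J)$-flags to produce an element $h\in G$ sending the base $(\{i\}\cup J)$-flag to it, read off from the $J$-coordinates that $h\in G_J$, and conclude from the $i$-coordinate that $g\in G_i h\subseteq G_iG_J$.

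For the converse, I would take an arbitrary flag $F$ of type $\{i\}\cup J$, use transitivity on flags of type $J$ to move its $J$-part onto $(G_j)_{j\in J}$, thereby reducing $F$ to a flag of the form $(G_ig',(G_j)_{j\in J})$ with $g'\in\bigcap_{j\in J}G_iG_j$, and invoke the hypothesis to write $g'=g_ih$ with $g_i\in G_i$ and $h\in G_J$. Then $G_i g'=G_i h$, so the flag is the image of the base $(\{i\}\cup J)$-flag under right multiplication by $h$, establishing transitivity.

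I do not anticipate a real obstacle: the entire content is set-theoretic bookkeeping between products $G_iG_j$, cosets $G_i g$, and the stabilizer $G_J$. The only point needing a moment of care is recognizing that the equality $\bigcap_{j\in J}G_iG_j = G_iG_J$ is precisely the statement that $G_J$ already sweeps out all $i$-elements extending the base $J$-flag, which is exactly what is needed to promote transitivity on $J$-flags to transitivity on $(\{i\}\cup J)$-flags.
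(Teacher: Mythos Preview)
Your proposal is correct and follows essentially the same approach as the paper: both directions proceed by normalizing via the $J$-part and then reading off the $i$-coordinate, with the key observation that the $i$-elements incident to the base $J$-flag are exactly parametrized by $\bigcap_{j\in J}G_iG_j$, while the stabilizer $G_J$ acts on them with image parametrized by $G_iG_J$. The only cosmetic difference is that the paper phrases the forward direction using $x^{-1}$ rather than $g$, but the logic is identical.
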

\begin{proof}
We set \(J = \{j_1,...,j_k\}\) for some natural number \(k\).
Suppose first that \(G\) is transitive on the set of flags of type \(\{i\}\cup J\). Then \(G\) is obviously also transitive on the set of flags of type \(J\). Let us now take an element \(x \in \bigcap_{j\in J}G_i  G_j\). We will prove that \(x\) is also in \( G_i G_J\).
Since \(x \in \bigcap_{j\in J}G_i  G_j\), we have that \(x\) is in every \(G_iG_j\) and that therefore \(x^{-1}G_i \cap G_j \ne \emptyset\) for every \(j \in J\). This means that \(\{x^{-1}G_i,G_{j_1}, ...., G_{j_k}\}\) is a flag of \(\Gamma\). By transitivity hypothesis, we can thus find an element \(g\in G\) such that \(g(G_i) = x^{-1}G_i\) and \(G_{j_m} = G_{j_m}\) for every \(m = 1,...,k\). This means that \(x^{-1} \in gG_i\) and \(g\in G_J\). Therefore, we have \(x\in G_ig^{-1} \subseteq G_iG_J\) and this shows that \(\bigcap_{j\in J}G_i  G_j\subseteq G_i G_J\).

The other inclusion is trivial since any \(x \in G_J\) also belongs to every \(G_j\) and thus any \(y \in G_iG_J\) belongs to every \(G_iG_j\) for every \(j\in J\).
This concludes the proof of the only if part.

Suppose now that \(\bigcap_{j\in J}G_i  G_j=G_i G_J\) and that \(G\) is transitive on the set of flags of type \(J\) and let us take a flag \(F\) of type \(\{i\}\cup J\) in \(\Gamma\). We want to show that \(F\) lies in the same \(G\)-orbit as \(\{G_i,G_{j_1}, ...., G_{j_k}\}\).
Since \(G\) is transitive on the set of flags of type \(J\), there exists an element \(g\in G\) such that \(g(\{ x_1G_{j_1},...,x_kG_{j_k}\}) =\{G_{j_1},...,G_{j_k}\}\). Then \(g(X) = \{gx_0G_i,G_{j_1},...,G_{j_k}\}\). 
We can therefore suppose that \(F = \{xG_i,G_{j_1}, ...., G_{j_k}\}\) without loss of generality. 
By looking at the incidence relations in \(F\), since \(\bigcap_{j\in J}G_i  G_j=G_i G_J\) by hypothesis, we have that $xG_i * G_{j_l}$ for each $l=1,\ldots , k$ if and only if $xG_i$ meets all the other cosets on $G_J$. Therefore there exists an element $z \in \cap_{j\in J}G_j\cap xG_i$ such that $z^{-1}(F) = \{G_i,G_{j_1},...,G_{j_k}\}$.
This concludes the proof.
\end{proof}
We are now ready to state the main proposition of this section.
\begin{prop}
\label{chiral_final}
Let \(\Gamma= \Gamma(G,(G_i)_{i\in I})\) be a coset geometry over \(I\) and fix \(J\subseteq I\setminus \{i\}\). Then the following are equivalent:
\begin{enumerate}
\item[(1)] $G$ is transitive on the set of flags of type \(J \cup \{i\}\) for every \(i \in I\setminus J\).
\item[(2)] The homomorphism \(\varphi_J\) is surjective.
\item[(3)] For every \(i \in I \setminus J\), we have \(\bigcap_{j\in J} G_jG_i = G_JG_i\) and \(G\) is transitive on the set of flags of type \(J\).
\end{enumerate}
\end{prop}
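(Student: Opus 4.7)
The strategy is to prove the three-way equivalence by the cycle $(1)\Rightarrow(3)\Rightarrow(2)\Rightarrow(1)$, leaning on the two technical lemmas already at our disposal: Lemma~\ref{lemma:varphi}, which trades transitivity on $(J\cup\{i\})$-flags for the pair of transitivity on $J$-flags together with the intersection identity $\bigcap_{j\in J}G_jG_i=G_JG_i$, and Lemma~\ref{lemma1.8.9}, which trades surjectivity of $\varphi_J$ for the family of such intersection identities ranging over $i\in I\setminus J$.

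The implication $(1)\Rightarrow(3)$ is obtained by applying Lemma~\ref{lemma:varphi} separately for each $i\in I\setminus J$: each application converts the hypothesis ``$G$ is transitive on flags of type $J\cup\{i\}$'' into the pair consisting of transitivity on $J$-flags together with the intersection identity for that particular $i$, and conjoining over all $i$ recovers exactly statement (3). The implication $(3)\Rightarrow(2)$ is then immediate from Lemma~\ref{lemma1.8.9}, since its surjectivity criterion is literally the family of intersection identities listed in (3).

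For $(2)\Rightarrow(1)$ I would first recover the intersection identities from surjectivity of $\varphi_J$ via Lemma~\ref{lemma1.8.9}. Then, for a given $i\in I\setminus J$ and an arbitrary flag $F=F'\cup\{yG_i\}$ of type $J\cup\{i\}$, my plan is to normalise the type-$i$ element $yG_i$ onto $G_i$ using Proposition~\ref{tits}, so that $F'$ lies inside the residue $\Gamma_{G_i}$; then use the always-bijective $\varphi_{\{i\}}$ (see the remark following Lemma~\ref{lemma1.8.9}) to identify $\Gamma_{G_i}$ with the smaller coset geometry $\Gamma(G_i,(G_{\{i,l\}})_{l\in I\setminus\{i\}})$; and finally invoke the intersection identity $\bigcap_{j\in J}G_jG_i=G_JG_i$ to exhibit an element of $G_J\cap G_i$ that carries $F'$ onto the distinguished type-$J$ flag while stabilising $G_i$, thus transporting the whole of $F$ onto the base chamber-flag.

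The main obstacle is precisely the $(2)\Rightarrow(1)$ step, and within it the need to match an \emph{arbitrary} sub-flag of type $J$ to the distinguished one $\{G_j\}_{j\in J}$: surjectivity of $\varphi_J$ only a priori governs the residue of the distinguished flag itself, whereas (1) requires matching every $J$-sub-flag to that distinguished one. Bridging this gap is the key technical point; my plan handles it by pivoting through the normalised type-$i$ element so that the transport problem is pushed into the small coset geometry $\Gamma(G_i,\ldots)$, where the identification via $\varphi_{\{i\}}$ together with the intersection identity from (2) should furnish the required transporter inside $G_J\cap G_i$. Verifying that this transporter is correctly built---in particular that its action on the remaining cosets of $F$ really produces the standard flag---is where the delicate bookkeeping will concentrate.
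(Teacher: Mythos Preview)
The paper's proof is two sentences: it cites Lemma~\ref{lemma:varphi} for $(1)\Leftrightarrow(3)$ and Lemma~\ref{lemma1.8.9} for $(2)\Leftrightarrow(3)$, and stops. Your implications $(1)\Rightarrow(3)$ and $(3)\Rightarrow(2)$ match this exactly; the difference is that instead of closing with $(2)\Rightarrow(3)$ via Lemma~\ref{lemma1.8.9} (and then $(3)\Rightarrow(1)$ by the ``if'' direction of Lemma~\ref{lemma:varphi}), you attempt a direct $(2)\Rightarrow(1)$.

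You are right that something is being glossed over: Lemma~\ref{lemma1.8.9} yields from (2) only the family of intersection identities, not the clause ``$G$ is transitive on flags of type $J$'' appearing in (3); the paper does not comment on this. But your proposed repair does not close that gap. After normalising so that $F=\{G_i\}\cup\{x_jG_j\}_{j\in J}$ with each $x_j\in G_i$, what you need is some $g\in G_i$ with $gx_j\in G_j$ for every $j\in J$, i.e.\ $g\in G_i\cap\bigcap_{j\in J}G_jx_j^{-1}$. The identity $\bigcap_{j\in J}G_jG_i=G_JG_i$ is a statement about the double-coset \emph{sets} $G_jG_i$, not about a particular simultaneous system of cosets $\{G_jx_j^{-1}\}_{j\in J}$, and it does not by itself produce such a $g$. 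Bijectivity of $\varphi_{\{i\}}$ does not help either: as the remark after Lemma~\ref{lemma1.8.9} emphasises, a bijective homomorphism need not be an isomorphism, so the preimage of the flag $F'$ in $\Gamma(G_i,(G_{\{i,l\}})_{l})$ is not known to be a flag there, and you cannot transfer the problem into the smaller coset geometry. So the ``delicate bookkeeping'' you flag is exactly where your argument breaks. In the paper's actual applications this is harmless, since Proposition~\ref{jnoti} supplies transitivity on all non-chamber flags independently.
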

\begin{proof}
The equivalence between \((1)\) and \((3)\) is given by Lemma~\ref{lemma:varphi} while the equivalence between \((2)\) and \((3)\) is given by Lemma~\ref{lemma1.8.9}.
\end{proof}
We thus see that, if \(\Gamma = \Gamma(G,(G_i)_{i\in I})\) is a chiral geometry, by Proposition~\ref{jnoti},  the group $G$ is transitive on every set of flags of type \(J \ne I\) and therefore, every \(\varphi_J\) is surjective, except those related to residues of rank one, which are of no importance for residual connectedness. Since the \(\varphi_J\)'s are surjective, we deduce a sufficient condition for the residue \(\Gamma_J\) to be connected.
\begin{prop}
Let \(\Gamma = \Gamma(G,(G_i)_{i\in I})\) be a chiral coset geometry over \(I\) and take a subset \(J\subseteq I\) such that \(|J| < |I|\). If \(G_J = \langle G_{J\cup \{i\}}~|~i\in I\setminus J\rangle\), then \(\Gamma_J\) is connected.
\end{prop}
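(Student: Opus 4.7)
The plan is to combine Proposition~\ref{chiral_final} with the classical criterion that a coset incidence system $\Gamma(H,(H_k)_{k\in K})$ has a connected incidence graph if and only if $H=\langle H_k\mid k\in K\rangle$. I would first observe that since $|J|<|I|$, Proposition~\ref{jnoti} makes $G$ transitive on the flags of type $J$, so that every residue of such a flag is isomorphic to $\Gamma_{\{G_j\mid j\in J\}}$; it therefore suffices to prove that this particular residue is connected. For residual connectedness we may moreover assume $|I\setminus J|\ge 2$, since the $|I\setminus J|=1$ case concerns only a rank-one residue and is not part of the residual connectedness requirement.

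Second, for every $i\in I\setminus J$ we have $|J\cup\{i\}|\le |I|-1<|I|$, and so Proposition~\ref{jnoti} again yields that $G$ is transitive on the flags of type $J\cup\{i\}$. Proposition~\ref{chiral_final} then implies that the injective homomorphism of incidence systems
\[
\varphi_J\colon \Gamma\bigl(G_J,(G_{J\cup\{i\}})_{i\in I\setminus J}\bigr)\longrightarrow \Gamma_{\{G_j\mid j\in J\}}
\]
coming from Lemma~\ref{lemma1.8.9} is surjective.

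Finally, because $\varphi_J$ preserves incidence, it sends edges of the domain's incidence graph to edges of the codomain's incidence graph; combined with surjectivity on vertices this forces a connected domain to have a connected codomain. The domain is itself a coset incidence system whose ambient group is $G_J$ and whose defining subgroups are exactly the $G_{J\cup\{i\}}$ for $i\in I\setminus J$, so the hypothesis $G_J=\langle G_{J\cup\{i\}}\mid i\in I\setminus J\rangle$ is precisely the standard generation criterion guaranteeing its connectedness, and the result follows. The one point in the argument that deserves care is this last connectedness criterion for coset geometries, which is a standard ingredient from \cite{Diagram Geometry} and which underlies the flag-transitive statement already cited as Theorem~\ref{proposition:RC}; beyond that, the proof is essentially a direct assembly of the lemmas established earlier in the section.
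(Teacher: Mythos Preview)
Your argument is correct and follows essentially the same route as the paper's own proof: the hypothesis makes the coset incidence system $\Gamma(G_J,(G_{J\cup\{i\}})_{i\in I\setminus J})$ connected, chirality together with Proposition~\ref{jnoti} and Proposition~\ref{chiral_final} force $\varphi_J$ to be surjective, and surjectivity pushes connectedness forward to $\Gamma_J$. The paper's proof is simply a terser version of the same three steps; your additional remarks on flag-transitivity for type $J$ and on the harmless $|I\setminus J|=1$ case are accurate context but not needed for the statement as the paper intends it.
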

\begin{proof}
If \(G_J = \langle G_{J\cup \{i\}}~|~i\in I\setminus J\rangle\), the coset geometry \(\Gamma(G_J,(G_{J\cup \{i\}})_{i\in I\setminus J})\) is connected. Proposition~\ref{chiral_final} implies that \(\varphi_J: \Gamma(G_J,(G_{J\cup \{i\}})_{i\in I}) \to \Gamma_J\) is surjective and therefore \(\Gamma_J\) is also connected.
\end{proof}
If we start from a \(C^+\)-group \((G^+,R)\) and construct the associated coset geometry with Construction~\ref{hyper}, the intersection condition ensures that every \(\Gamma_J\) is connected if \(|J| \le |I|-3\). It remains only to check what happens to the residues of rank \(2\) of \(\Gamma(G^+,(G_i^+)_{i\in I})\). Let us just state a small technical Lemma before doing so.
\begin{lemma}\label{lemma:chamber}
Let \((G^+,R)\) be a $C^+$-group and $\Gamma(G^+,(G_i^+)_{i\in I})$ be its associated geometry. Then \(C\) = \(\{G_0^+, G_1^+, ..., G_{n-1}^+\}\) is a chamber of  \(\Gamma\). Moreover, the set \(\{G_0^+, G_1^+, ..., \alpha_{i,j}^{-1}G_{j}^+,..., G_{n-1}^+\}\) is a \(j\)-adjacent chamber to \(C\) if \(i\) is in \(I\setminus \{j\}\).
\end{lemma}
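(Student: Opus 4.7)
The plan is to verify two things. First, the set $C = \{G_0^+, \dots, G_{n-1}^+\}$ is a chamber: the identity $1_{G^+}$ lies in every subgroup $G_k^+$, so every pairwise intersection $G_i^+ \cap G_k^+$ is nonempty, and since the types $t(G_k^+) = k$ are pairwise distinct, $C$ is a flag of type $I$, hence a chamber. Second, replacing $G_j^+$ by $\alpha_{i,j}^{-1}G_j^+$ should still yield a chamber, which is then automatically $j$-adjacent to $C$ because the remaining entries are left unchanged.

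For this second claim, it suffices to check $\alpha_{i,j}^{-1}G_j^+ \cap G_k^+ \neq \emptyset$ for every $k \in I \setminus \{j\}$, equivalently $\alpha_{i,j}^{-1} = \alpha_j^{-1}\alpha_i \in G_k^+ G_j^+$. I would produce this factorization by cases. When $j, k \geq 1$, the decomposition $\alpha_j^{-1} \cdot \alpha_i$ does the job: $\alpha_j^{-1} \in G_k^+$ since $j \neq k$, and $\alpha_i \in G_j^+$ (if $i \geq 1$, since $i \neq j$; if $i = 0$ then $\alpha_i = 1$). When $k = 0$, I would pivot through $\alpha_1$ by writing $\alpha_j^{-1}\alpha_i = (\alpha_j^{-1}\alpha_1)(\alpha_1^{-1}\alpha_i)$, with the first factor in $G_0^+$ essentially by the definition of its generators and the second factor in $G_j^+$. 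The case $j = 0$ then uses $\alpha_{i,0}^{-1} = \alpha_i$ and a mirror-image factorization.

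The main obstacle is the asymmetric definition of $G_0^+$: whereas $G_k^+$ for $k \geq 1$ is generated by a subset of $R$ itself, $G_0^+$ is generated by the products $\alpha_1^{-1}\alpha_l$, and this asymmetry multiplies the number of subcases to inspect whenever the index $0$ appears. A cleaner route, which I would use in the actual write-up, is the uniform description $G_k^+ = \langle \alpha_{a,b} \mid a,b \in I \setminus \{k\}\rangle$, valid for every $k \in I$; the witness then takes the symmetric form $\alpha_j^{-1}\alpha_i = (\alpha_j^{-1}\alpha_c)(\alpha_c^{-1}\alpha_i)$ for any intermediate index $c \in I \setminus \{j,k\}$, which exists as soon as $n \geq 3$, while smaller ranks are handled by direct inspection.
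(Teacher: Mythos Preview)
Your proposal is correct and, once you pass to the uniform description $G_k^+ = \langle \alpha_{a,b} : a,b \neq k\rangle$ in your ``cleaner route,'' it matches the paper's argument. The paper observes that $\alpha_{i,j}^{-1}=\alpha_{j,i}$ already lies in $G_k^+$ whenever $k \neq i,j$, and for the remaining case $k=i$ exhibits the common element $\alpha_{j,c} = \alpha_{j,i}\alpha_{i,c}\in \alpha_{i,j}^{-1}G_j^+\cap G_i^+$ for a third index $c \neq i,j$ --- precisely your factorization through an intermediate index.
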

\begin{proof}
By definition of \(G_k^+\), \( \alpha_{i,j}^{-1}\) is in \(G_k^+\) except if \( k = i\)  or \( j\). Therefore, since \(\alpha_{i,j}^{-1} G_j^+\) contains \(\alpha_{i,j}^{-1}\), it has a non empty intersection with every \(G_k^+\) with \(k \ne i,j\).
It just remains to check that \(\alpha_{i,j}^{-1} G_j^+ \cap G_i^+ \ne \emptyset\). For example, since \(\alpha_{i,j}^{-1} = \alpha_{j,i}\), we have that \(\alpha_{j,k} = \alpha_{j,i} \alpha_{i,k}\) is in \(\alpha_{i,j}^{-1} G_j^+\) for any \(k \ne i,j\).
By definition \(\alpha_{j,k}\) is also in \(G_i^+\) which concludes the proof.
\end{proof}

\begin{coro}
\label{firm}
Let \((G^+,R)\) be a $C^+$-group and $\Gamma:=\Gamma(G^+,(G_i^+)_{i\in I})$ be its associated geometry. Then \(\Gamma\) is firm.
\end{coro}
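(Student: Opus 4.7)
The plan is to verify that every rank-one residue of $\Gamma$ contains at least two elements, extracting the second one from the construction in Lemma~\ref{lemma:chamber}.

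First, for the base chamber $C = \{G_0^+, \ldots, G_{n-1}^+\}$ and any $j \in I$, $i \in I \setminus \{j\}$, Lemma~\ref{lemma:chamber} exhibits the $j$-adjacent chamber $C' = \{G_k^+ : k \neq j\} \cup \{\alpha_{i,j}^{-1}G_j^+\}$. This furnishes two candidate elements $G_j^+$ and $\alpha_{i,j}^{-1}G_j^+$ in the rank-one residue of $C \setminus \{G_j^+\}$. I would then verify they are genuinely distinct by showing $\alpha_{i,j}^{-1} \notin G_j^+$: in the generic case ($i, j \neq 0$) this follows from the independence of $R$ (a consequence of IC$^+$ noted in Section~\ref{section6}), since $\alpha_i \in G_j^+$ would then give $\alpha_j = \alpha_i \cdot \alpha_{i,j} \in G_j^+$, a contradiction; the corner cases $i = 0$ or $j = 0$ can be handled by using IC$^+$ to show $\bigcap_{l \neq m} G_l^+ = 1$ and thus force the relevant element outside $G_m^+$.

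Second, I would extend this to all rank-one residues. The $G^+$-action on $\Gamma$ by right multiplication (Proposition~\ref{tits}) is type-preserving and preserves incidence, so the construction transports to every chamber of the form $C \cdot g$ and hence to every corank-one subflag of such a chamber. Since $\Gamma$ is assumed to be a geometry, any corank-one flag lies in at least one chamber, and the construction of Lemma~\ref{lemma:chamber} is purely algebraic, depending only on the fact that $\alpha_{i,j}^{-1} \in G_k^+$ for $k \neq i, j$; this inclusion should adapt verbatim to yield the required second extension at any chosen base chamber.

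The main obstacle, I expect, will be making this last step fully rigorous: not every chamber of $\Gamma$ need lie in the $G^+$-orbit of $C$, so the extension is not completely automatic. I would resolve this either by an orbit-by-orbit analysis of $G^+$ on chambers, or by a purely algebraic construction of a second element in the rank-one residue of an arbitrary corank-one flag, again using only IC$^+$ and the defining relations among the $\alpha_{i,j}$.
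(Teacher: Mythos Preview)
Your approach is essentially the paper's: use Lemma~\ref{lemma:chamber} to produce a second chamber through any corank-one subflag. The paper's proof is in fact much terser than yours --- it simply takes an arbitrary non-maximal flag $F$, completes it to a chamber $C$, picks a type $i$ missing from $F$, and invokes Lemma~\ref{lemma:chamber} to obtain an $i$-adjacent chamber of $C$ (which then also contains $F$).

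Note that the paper does \emph{not} pause to verify that $\alpha_{i,j}^{-1}G_j^+ \neq G_j^+$, nor does it address the issue you flag as the main obstacle: Lemma~\ref{lemma:chamber} is stated only for the base chamber, yet the proof applies it to an arbitrary chamber $C$. So the two concerns you raise are real subtleties that the paper simply glosses over; your proposal is already more careful than the published argument on both counts. Your suggested fix via the $G^+$-action works for chambers in the base orbit, and if one is willing to assume chirality (which is the only setting where the corollary is subsequently used), Proposition~\ref{jnoti} gives transitivity on corank-one flags and closes the gap entirely.
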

\begin{proof}
Every non maximal flag \(F\) of \(\Gamma\) is contained in a chamber \(C\). Since \(F\) is not maximal, there exists a type \(i\in I\) such that \(F\) has no element of type \(\{i\}\). Then, Lemma~\ref{lemma:chamber} gives us an \(i\)-adjacent chamber of \(C\), which also contains \(F\). 
\end{proof}

Let us now have a look at the residues of rank two of a chiral coset geometry \(\Gamma=\Gamma(G^+,(G_i^+)_{i\in I})\). For the sake of simplicity, let us first suppose that \(\Gamma\) is a rank \(3\) coset geometry.
In this case, \(\Gamma = \Gamma(G^+, (G_0^+,G_1^+,G_2^+))\) with \(G_0^+ = \langle \alpha_1^{-1}\alpha_2 \rangle\), \(G_1^+ = \langle \alpha_2 \rangle\) and \(G_2^+ =\langle \alpha_1 \rangle \).

Let us recall that \(G_i^+ \cap G_j^+ = \{e\} \) for all \(i \ne j \in \{0,1,2\}\)  by the intersection condition of \(C^+\)-groups. By Theorem~\ref{chiral_final} we have bijective homomorphisms \[\varphi_{\{i\}}: \Gamma(G_i^+, ({e},{e})) \mapsto \Gamma_{\{i\}}\] for any \(i =0,1,2\). Let us suppose, without loss of generality, that \(i = 0\).
Since \(\varphi_{\{0\}}\) is surjective, every element of type $j$ in \(\Gamma_{\{0\}}\) is of the form \(\alpha_{1,2}^k G_j^+\) for some natural number \(k \le o(\alpha_{1,2})\). Furthermore, we also know that the elements of \(\Gamma_{\{0\}}\) are two by two incident since \(\varphi_{\{0\}}\) is a homomorphism and the equivalence classes are trivial in this case.
More precisely, we know that \(\alpha_{1,2}^k G_1^+\) is incident to \(\alpha_{1,2}^k G_2^+\) for every \(k\).

Now, by Lemma~\ref{lemma:chamber}, we know that \((G_0^+, G_1^+,( \alpha_{1,2})^{-1} G_2^+)\) is a chamber that is 2-adjacent to \((G_0^+,G_1^+,G_2^+)\). This means that \(G_1^+\) is incident not only to \(G_2^+\) but also to  \((\alpha_{1,2})^{-1} G_2^+\). By using the action of \(\alpha_{1,2}\) on the incident pair \(\{G_1^+,(\alpha_{1,2})^{-1} G_2^+\}\), we obtain that \((\alpha_{1,2})^kG_1^+\) is incident to \((\alpha_{1,2})^{k-1} G_2^+\) for every \(k\). Putting this together with the incidence relation deduced from \(\varphi_{\{0\}}\) we get that \(\Gamma_{\{0\}}\) has an incidence graph which is at least a circuit of size \(2|G_0^+|\) . If \(\Gamma\) is thin, then the graph is fully determined. If \(\Gamma\) is not thin, there might be more incidence relations between the elements of \(\Gamma_{\{0\}}\).
In both cases, \(\Gamma_{\{0\}}\) is connected.

\begin{figure}
\begin{tikzpicture}[scale = 1.5]
\draw[black, thick](1,1.73) -- (-1,1.73)  node[pos = .45, black, anchor = south]{};
\draw[black, thick,dotted](-1,1.73) -- (-2,0)  node[pos = .45, black, anchor = south]{};
\draw[black, thick](-2,0) -- (-1,-1.73)  node[pos = .45, black, anchor = south]{};
\draw[black, thick,dotted](-1,-1.73) -- (1,-1.73)  node[pos = .45, black, anchor = south]{};
\draw[black, thick](1,-1.73) -- (2,0)  node[pos = .45, black, anchor = south]{};
\draw[black, thick,dotted](2,0) -- (1,1.73)  node[pos = .45, black, anchor = south]{};
\filldraw[black] (1,1.73) circle (1.5pt) node[anchor=south] {\(\alpha_{1,2}G_2^+\)};
\filldraw[black] (2,0) circle (1.5pt) node[anchor=west] {\((\alpha_{1,2})^2G_1^+\)};
\filldraw[black] (-2,0) circle (1.5pt) node[anchor=east] {\(G_2^+\)};
\filldraw[black] (-1,1.73) circle (1.5pt) node[anchor=south] {\(\alpha_{1,2}G_1^+\)};
\filldraw[black] (-1,-1.73) circle (1.5pt) node[anchor=north] {\(G_1+\)};
\filldraw[black] (1,-1.73) circle (1.5pt) node[anchor=north] {\((\alpha_{1,2})^2G_2^+\)};
\end{tikzpicture}
\centering
\caption{The incidence graph of a residue of rank \(2\) when \(|G_0^+| =3\).\label{fig:rank2residue}}
\end{figure}
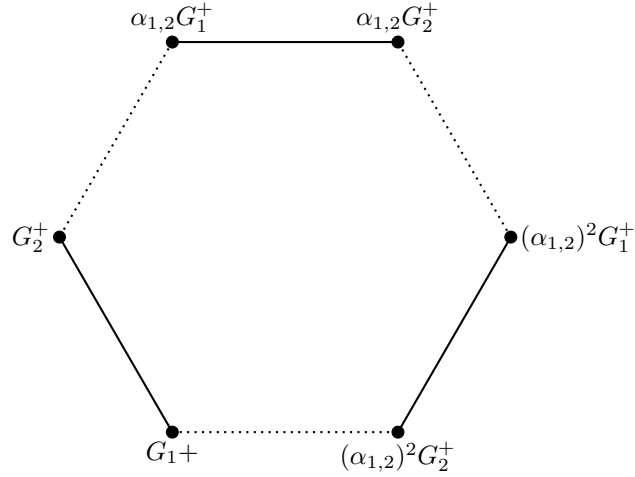
\begin{example}
Figure~\ref{fig:rank2residue} illustrates the reconstruction of the residue \(\Gamma_{\{0\}}\) using the method described in the above paragraph, supposing that \(\alpha_{1,2}\) is of order 3. The full lines represent the incidence relations deduced from the action of \(\alpha_{1,2}\) on the chamber \(\{G_0^+,G_1^+,G_2^+\}\) and the dotted lines represent the incidence relations deduced from the action of \(\alpha_{1,2}\) on the chamber \(\{G_0^+,G_1^+,(\alpha_{1,2})^{-1}G_2^+\}\)
\end{example}
\begin{prop}
\(\Gamma = \Gamma(G^+,(G_0^+,G_1^+,G_2^+))\) is residually connected and firm. 
\end{prop}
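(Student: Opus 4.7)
The plan is to separate the two conclusions. Firmness is immediate from Corollary~\ref{firm}, so the substantive task is residual connectedness, which for a rank~$3$ geometry reduces to showing that $\Gamma$ itself is connected together with each of the three rank-two residues $\Gamma_{\{0\}}$, $\Gamma_{\{1\}}$, $\Gamma_{\{2\}}$.

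For the connectedness of $\Gamma$ itself I would invoke the standard fact for coset geometries that $\Gamma(G,(G_i)_{i\in I})$ has a connected incidence graph as soon as the maximal parabolic subgroups generate $G$. Here $G_2^+=\langle\alpha_1\rangle$ and $G_1^+=\langle\alpha_2\rangle$ already generate $G^+$, since by hypothesis $R=\{\alpha_1,\alpha_2\}$ is a generating set. Hence $\Gamma$ is connected.

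For the residues, $\Gamma_{\{0\}}$ has been handled explicitly in the paragraph preceding the statement: the bijective homomorphism $\varphi_{\{0\}}$ coming from Proposition~\ref{chiral_final} supplies the incidences $\alpha_{1,2}^{k}G_1^{+}\, *\,\alpha_{1,2}^{k}G_2^{+}$ for every~$k$, while Lemma~\ref{lemma:chamber} gives the additional incidence $G_1^{+}\,*\,\alpha_{1,2}^{-1}G_2^{+}$, and translating this along the $\alpha_{1,2}$-action produces the incidences $\alpha_{1,2}^{k}G_1^{+}\,*\,\alpha_{1,2}^{k-1}G_2^{+}$. Together these edges form a $2|G_0^{+}|$-cycle through all vertices, so $\Gamma_{\{0\}}$ is connected. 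The same template applies verbatim to $\Gamma_{\{1\}}$ and $\Gamma_{\{2\}}$: Proposition~\ref{chiral_final} combined with Proposition~\ref{jnoti} shows that $\varphi_{\{i\}}$ is surjective (hence bijective, the intersections of the relevant subgroups being trivial by IC$^{+}$), yielding one family of incidence edges; Lemma~\ref{lemma:chamber} applied with an appropriate pair $(i,j)$ provides the second family; and the remaining generator of the corresponding rank-one cyclic subgroup propagates it around a circuit covering every coset.

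The main obstacle I anticipate is purely bookkeeping: one must argue that the two families of edges actually weave together into a single connected graph rather than splitting into several disjoint circuits. Concretely, one needs to verify that the cyclic shift by $\alpha_{1,2}$ (and its counterparts in the other two residues) carries the "extra" edge from Lemma~\ref{lemma:chamber} successively through all $2|G_j^{+}|$ cosets before closing up; this is the only place where a careful index check, as opposed to an appeal to the preceding lemmas, is required.
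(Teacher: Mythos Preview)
Your overall architecture matches the paper's: firmness from Corollary~\ref{firm}, connectedness of $\Gamma$ from $G^+=\langle G_i^+\rangle$, and connectedness of the rank-two residues via the circuit argument built from $\varphi_{\{i\}}$ plus Lemma~\ref{lemma:chamber}. The bookkeeping worry you raise about the two edge-families interleaving into a single $2|G_i^+|$-cycle is exactly what the paper accepts without further comment, so no extra work is expected there.

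There is, however, one genuine slip. The Proposition as stated carries no chirality hypothesis, and the paper makes a point of this: its proof says explicitly that ``we therefore need not to ask $\Gamma$ to be chiral on this special case.'' The reason is the Remark following Lemma~\ref{lemma1.8.9}: for a singleton $J=\{j\}$ the surjectivity condition $\bigcap_{k\in J}G_kG_i=G_JG_i$ is vacuous, so $\varphi_{\{j\}}$ is automatically bijective. You instead obtain surjectivity of $\varphi_{\{i\}}$ by invoking Proposition~\ref{jnoti} (transitivity on proper flags) together with Proposition~\ref{chiral_final}, and Proposition~\ref{jnoti} requires $\Gamma$ to be chiral. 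So your argument, as written, proves the Proposition only under an additional hypothesis that is neither stated nor needed. Replace the appeal to Proposition~\ref{jnoti} by the Remark after Lemma~\ref{lemma1.8.9} and your proof becomes complete and coincides with the paper's.
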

\begin{proof}
The surjectivity of \(\varphi_{\{i\}}\) for \(i = 0,1,2\) always holds as stated in an earlier remark, and we therefore need not to ask \(\Gamma\) to be chiral on this special case. The above construction then shows that every residue of rank \(2\) is connected. Moreover \(\Gamma\) itself is also connected since \(G^+= \langle G_i^+~|~i = 0,1,2\rangle\) by definition of \(C^+\)-groups.
Corollary~\ref{firm} says that \(\Gamma\) has to be firm.
\end{proof}

The construction described for rank \(2\) residues actually only depends on the surjectivity of \(\varphi_{\{i\}}\), that is guaranteed by Lemma~\ref{lemma:varphi}. We can thus extend this to coset geometries \(\Gamma(G^+,(G_i^+)_{i\in I})\) of any rank. Indeed, if we fix a \(J \subseteq I\) such that \(|J| = |I|-2\), then the coset geometry \(\Gamma(G_J,(G_{J\cup\{i\}})_{i\in I\setminus J})\) is generated by \(\alpha_{i,k}\) for the two elements \(i,k \in I\) that are not in \(J\) and the construction proceeds as for the rank \(3\) case. This permits to finish the proof of our main theorem.
\begin{proof}[Proof of Theorem~\ref{main}]
Putting everything together, the above construction yields that the rank \(2\) residues are connected and the intersection condition together with the surjectivity of \(\varphi_J\) for all \(J\subseteq I\) such that \(|J| \le |I| - 3 \) yields that the rank \(3\) or more residues are also connected. \(\Gamma\) itself is also connected by the intersection condition.
\end{proof}

\end{document}